\newtheorem{theorem}{Theorem}
\theoremstyle{plain}
\newtheorem{corollary}{Corollary}
\newtheorem{definition}{Definition}
\newtheorem{proposition}{Proposition}
\newtheorem{remark}{Remark}
\numberwithin{equation}{section}
\begin{document}
\title{Some Properties of Mappings on Generalized Topological Spaces}
\author{Xun Ge}
\author{Jianhua Gong}
\author{Ivan Reilly}

\begin{abstract}
This paper considers generalizations of open mappings, closed mappings,
pseudo-open mappings, and quotient mappings from topological spaces to
generalized topological spaces. Characterizations of these classes of
mappings are obtained and some relationships among these classes are
established.
\end{abstract}

\maketitle

\section{Introduction}

Generalized topological spaces are an important generalization of
topological spaces. Let $X$ be a set and $\mu \subseteq \exp X$. Then $\mu $
is called a generalized topology on $X$ and $(X,\mu )$ is called a
generalized topological space, if $\mu $ satisfies the following two
properties:

(1) $\emptyset \in \mu$,

(2) Any union of elements of $\mu$ belongs to $\mu$.

There are plenty of generalized topological spaces which are not topological
spaces. For instance, let $X=\{a,b,c\}$ and put $\mu =\{ \emptyset
,\{a,b\},\{b,c\},X\}$, then $(X,\mu )$ is a generalized topological space
but it is not a topological space. Many interesting results and valuable
applications of generalized topological spaces were previously considered
(see \cite{C2, C1, GYC, GXY, La, M, SS, S}, for example).

Min investigated $(\mu ,\nu )$-continuous mappings from a generalized
topological space $(X,\mu )$ to another generalized topological space $%
(Y,\nu )$, which were generalized continuous mappings introduced by Csaszar
in \cite{C2}. He then obtained some interesting characterizations for $(\mu
,\nu )$-continuous mappings (\cite{M}). In this paper, we make further
investigations for $(\mu ,\nu )$-continuous mappings and develop some aspect
of mapping theory in generalized topology. We consider some mappings which
are defined on topological spaces for example open mappings, closed
mappings, pseudo-open mappings, and quotient mappings, and we generalize
them to the setting of generalized topological spaces (see Definition \ref{g
mappings}). We obtain characterizations of these classes of mappings, and
establish some relationships among these classes.

Throughout this paper, generalized topological spaces are denoted by $GT$%
-spaces and $(\mu ,\nu )$-continuous mappings are in fact a generalization
of continuous mappings and are denoted by $g$-continuous mappings (see
Definition \ref{g continuous}). $(X,\mu )$ and $(Y,\nu )$ always denote $GT$%
-spaces. Every mapping in this paper is assumed to be surjective, and every $%
GT$-space is assumed to be "strong". Here, a $GT$-space $(X,\mu )$ is said
to be strong if $X\in \mu $ \cite{C1}.

Let $(X,\mu )$ be a $GT$-space and $B\subseteq X$. $B$ is called $\mu $-open
(resp. $\mu $-closed) in $X$ if $B\in \mu $ (resp. $X$-$B\in \mu $). $I(B)$
denotes the largest $\mu $-open subset of $B$, i.e., $I(B)$ is the union of
all $\mu $-open subsets of $B$, and is called the interior of $B$ in $(X,\mu
)$. $C(B)$ denotes the smallest $\mu $-closed subrset of $B$, i.e., $C(B)$
is the intersection of all $\mu $-closed subsets of $B$, and is called the
closure of $B$ in $(X,\mu )$. For $x\in X,$ let $U_{x}$ be a $\mu $-open
subset containing $x,$ i.e., $x\in U_{x}\in \mu $. Then $x$ is called a $\mu 
$-cluster point of $B$ in $(X,\mu )$ if $U_{x}\bigcap (B-\{x\})\neq
\emptyset $ for each $U_{x}$.\ Let $d(B)$ denote the set of all $\mu $%
-cluster points of $B$ in $(X,\mu )$. Thus, $x\in d(B)$ if and only if $x\in
C(B-\{x\}).$

\section{Preliminaries}

The following notations are used throughout this paper. Let $(X,\mu )$ be a $%
GT$-space, let $x\in X$ and $\mathcal{F}\subseteq \exp X$.

(1) $\bigcup \mathcal{F}=\bigcup \{F:F\in \mathcal{F}\}$.

(2) $\bigcap \mathcal{F}=\bigcap \{F:F\in \mathcal{F}\}$.

(3) $\mu _{x}=\{U:x\in U\in \mu \}.$

(4) $N(x)=\bigcap \mu _{x}$.

\bigskip

In generalized topological spaces, some basic properties of interior
subsets, closure subsets, $\mu$-open subsets, $\mu$-closed subsets, and $\mu 
$-cluster points are given in the following proposition (see \cite{C2,C1,M},
for example).

\begin{proposition}
\label{properties of subsets}Let $B$ be a subset of a $GT-$space $(X,\mu )$.
Then the following hold.

(1) $I(B)\subseteq B\subseteq C(B)$.

(2) $I(I(B))=I(B)$ and $C(C(B))=C(B)$.

(3) If $B^{\prime }\subseteq B$, then $I(B^{\prime })\subseteq I(B)$, $%
C(B^{\prime })\subseteq C(B)$ and $d(B^{\prime })\subseteq d(B)$.

(4) $I(B)=B$ $\Longleftrightarrow $ $B$ is $\mu$-open in $X$ $%
\Longleftrightarrow $ for each $x\in B,U_{x}\subseteq B$ for some $U_{x}\in
\mu _{x}$.

(5) $C(B)=B$ $\Longleftrightarrow $ $B$ is $\mu $-closed in $X$ $%
\Longleftrightarrow $ for each $x\in X-B,U_{x}\bigcap B=\emptyset $ for some 
$U_{x}\in \mu _{x}$.

(6) $C(B)=X-I(X-B)$ and $I(B)=X-C(X-B).$

(7) $x\in C(B)$ $\Longleftrightarrow $ $U_{x}\bigcap B\neq \emptyset $ for
each $U_{x}\in \mu _{x}$.

(8) $x\in I(B)$ $\Longleftrightarrow $ $U_{x}\subseteq B$ for some $U_{x}\in
\mu _{x}$.

(9) $C(B)=B\bigcup d(B)$.

(10) $x\not \in d(\{x\})$ for each $x\in X$.
\end{proposition}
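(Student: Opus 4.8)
The plan is to derive everything from two structural facts about $\mu$ and then unwind the definitions, using De Morgan duality to pass between the interior and closure statements. First I would record the two facts I lean on throughout. Since $\mu$ is closed under arbitrary unions, $I(B)=\bigcup\{U\in\mu:U\subseteq B\}$ is itself a member of $\mu$, so $I(B)$ really is the largest $\mu$-open subset of $B$; dually, because $\emptyset\in\mu$ forces $X$ to be $\mu$-closed (so the defining family for $C(B)$ is nonempty) and complements turn unions into intersections, any intersection of $\mu$-closed sets is $\mu$-closed, so $C(B)$ really is the smallest $\mu$-closed set containing $B$ and is itself $\mu$-closed. With these in hand, (1) is immediate, since $I(B)$ is a union of subsets of $B$ while $C(B)$ is an intersection of supersets of $B$; (2) follows because $I(B)\in\mu$ and $C(B)$ is $\mu$-closed, so a second application of $I$ (resp. $C$) returns the same set; and (3) is the monotonicity built into ``largest/smallest'', together with the observation that if every $U_{x}$ meets $B^{\prime}-\{x\}$ then it a fortiori meets the larger set $B-\{x\}\supseteq B^{\prime}-\{x\}$.

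Next I would prove the neighborhood characterizations. Statement (8) is read straight off the definition: $x\in I(B)$ exactly when $x$ lies in some $\mu$-open $U\subseteq B$, i.e. some $U_{x}\in\mu_{x}$ with $U_{x}\subseteq B$. From (8) I get the interior case of (4): $I(B)=B$ forces $B\in\mu$ because $I(B)\in\mu$, while $B\in\mu$ gives $B\subseteq I(B)$ since $B$ is a $\mu$-open subset of itself, and the pointwise form is just (8) applied at each point, with $B=\bigcup_{x\in B}U_{x}$ recovering $\mu$-openness. Statement (5) is then the complementary version of (4), obtained by replacing $B$ with $X-B$ and invoking the definition of $\mu$-closed.

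The hinge of the whole argument is the duality (6). I would compute $X-I(X-B)=\bigcap\{X-U:U\in\mu,\ U\subseteq X-B\}$ and note that, as $U$ ranges over the $\mu$-open subsets of $X-B$, the set $X-U$ ranges over exactly the $\mu$-closed supersets of $B$ (using $U\subseteq X-B\iff B\subseteq X-U$); hence the right-hand side is $C(B)$. The second identity in (6) comes from applying the first to $X-B$ and complementing. Then (7) is the negation of (8) read through (6): $x\in C(B)$ iff $x\notin I(X-B)$ iff no $U_{x}$ satisfies $U_{x}\subseteq X-B$ iff every $U_{x}$ meets $B$. Finally (9) splits on whether $x\in B$: when $x\notin B$ one has $B-\{x\}=B$, so (7) reads precisely as $x\in d(B)$, and combined with $B\subseteq C(B)$ and $d(B)\subseteq C(B)$ (again via (7)) this yields $C(B)=B\cup d(B)$; and (10) is immediate because $\{x\}-\{x\}=\emptyset$ meets no $U_{x}$.

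I do not expect a serious obstacle, as the proposition is in essence a careful bookkeeping of definitions; the one point demanding genuine attention is the quantifier logic in (7), (9), and (10), where ``for each $U_{x}$'' must range over a nonempty family. This is exactly where the standing assumption that the space is strong ($X\in\mu$, hence $X\in\mu_{x}$ for every $x$) is used: it guarantees $\mu_{x}\neq\emptyset$, so that the cluster-point condition is not vacuously satisfied and (10) in particular holds. I would flag this dependence explicitly rather than let it hide inside the routine manipulations.
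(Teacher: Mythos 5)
Your argument is correct. Note that the paper itself offers no proof of this proposition: it is stated as a collection of known facts with a citation to the literature, so there is no internal argument to compare yours against. Taking your proposal on its own terms, the organization is sound and efficient: the two preliminary observations (that $I(B)\in\mu$ because $\mu$ is closed under arbitrary unions, and that $C(B)$ is $\mu$-closed because complementation turns the union axiom into closure of the $\mu$-closed sets under arbitrary intersections, with $X$ itself $\mu$-closed since $\emptyset\in\mu$) are exactly what is needed to make (1)--(5) and (8) routine, and routing (6), (7) and (9) through the De Morgan duality $C(B)=X-I(X-B)$ is the standard and cleanest path. One small gloss: you describe (5) as obtained from (4) by complementation, but the equivalence $C(B)=B\iff B$ is $\mu$-closed is not literally the complement of the corresponding statement in (4); it needs the separate (and already recorded) fact that $C(B)$ is itself $\mu$-closed and minimal among $\mu$-closed supersets of $B$. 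Your most valuable contribution is the explicit identification of where the standing ``strong'' hypothesis $X\in\mu$ enters: it guarantees $\mu_{x}\neq\emptyset$ for every $x$, without which the universally quantified cluster-point condition in (10) would be vacuously satisfied and the statement $x\notin d(\{x\})$ would fail. The paper never remarks on this dependence, so flagging it is a genuine improvement rather than mere bookkeeping.
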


\begin{definition}
$\label{g continuous}$Let $f:(X,\mu )\longrightarrow (Y,\nu )$ be a mapping.
Then $f$ is called $g$-continuous \cite{C2} if $f^{-1}(U)\in \mu $ for each $%
U\in \nu $.
\end{definition}

Notice that $g$-continuous mappings here are the same as $(\mu ,\nu )$%
-continuous mappings. Thus, the characterizations of $g$-continuous mappings
in the next proposition are the characterizations of $(\mu ,\nu )$%
-continuous mappings obtained in \cite{M}.

\begin{proposition}
\label{characterizations of g continuous}Let $f:(X,\mu )\longrightarrow
(Y,\nu )$ be a mapping. Then the following are equivalent.

(1) $f$ is $g$-continuous.

(2) $f^{-1}(F)$ is $\mu $-closed in $X$ for each $\nu $-closed subset $F$ of 
$Y$.

(3) $f(C(A))\subseteq C(f(A))$ for each subset $A$ of $X$.

(4) $C(f^{-1}(B))\subseteq f^{-1}(C(B))$ for each subset $B$ of $Y$.

(5) $f^{-1}(I(B))\subseteq I(f^{-1}(B))$ for each subset $B$ of $Y$.

(6) For each $x\in X$, if $f(x)\in V\in \nu $, then $f(U)\subseteq V$ for
some $U\in \mu _{x}$.
\end{proposition}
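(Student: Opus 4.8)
The plan is to prove the cyclic chain of implications $(1)\Rightarrow(2)\Rightarrow(3)\Rightarrow(4)\Rightarrow(5)\Rightarrow(6)\Rightarrow(1)$, which at once yields the equivalence of all six conditions. Throughout I would rely on two elementary set-theoretic facts valid for any mapping, namely $f(f^{-1}(S))\subseteq S$ for every $S\subseteq Y$ and $T\subseteq f^{-1}(f(T))$ for every $T\subseteq X$, together with the preimage-of-complement identity $f^{-1}(Y-S)=X-f^{-1}(S)$. The real content is supplied entirely by Proposition~\ref{properties of subsets}, so no finite intersections of $\mu$-open sets are ever needed and the classical arguments transfer to the generalized setting essentially verbatim.

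For $(1)\Rightarrow(2)$ I would take a $\nu$-closed set $F$, note $Y-F\in\nu$, and compute $X-f^{-1}(F)=f^{-1}(Y-F)\in\mu$ by $g$-continuity, so $f^{-1}(F)$ is $\mu$-closed. For $(2)\Rightarrow(3)$, given $A\subseteq X$, the set $C(f(A))$ is $\nu$-closed, hence $f^{-1}(C(f(A)))$ is $\mu$-closed by $(2)$ and clearly contains $A$; since $C(A)$ is the smallest $\mu$-closed superset of $A$ this forces $C(A)\subseteq f^{-1}(C(f(A)))$, and applying $f$ gives $f(C(A))\subseteq C(f(A))$. The step $(3)\Rightarrow(4)$ is then $(3)$ applied to $A=f^{-1}(B)$, combined with $f(f^{-1}(B))\subseteq B$ and the monotonicity of $C$ from Proposition~\ref{properties of subsets}(3): from $f(C(f^{-1}(B)))\subseteq C(B)$ one takes preimages to obtain $C(f^{-1}(B))\subseteq f^{-1}(C(B))$.

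I expect $(4)\Rightarrow(5)$ to be the main obstacle, since it is the one place where the interior–closure duality of Proposition~\ref{properties of subsets}(6) must be managed with care. The idea is to rewrite $f^{-1}(I(B))=f^{-1}(Y-C(Y-B))=X-f^{-1}(C(Y-B))$, apply $(4)$ to the set $Y-B$ to get $C(f^{-1}(Y-B))\subseteq f^{-1}(C(Y-B))$, complement both sides, and finally use $f^{-1}(Y-B)=X-f^{-1}(B)$ to recognize $X-C(f^{-1}(Y-B))$ as $I(f^{-1}(B))$. Keeping the complements straight across these identities is where a slip is easiest to make, but it is bookkeeping rather than a genuine difficulty.

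Finally, $(5)\Rightarrow(6)$ follows because for $f(x)\in V\in\nu$ we have $I(V)=V$, so $(5)$ gives $f^{-1}(V)=f^{-1}(I(V))\subseteq I(f^{-1}(V))$, whence $f^{-1}(V)$ is $\mu$-open; taking $U=f^{-1}(V)\in\mu_x$ yields $f(U)\subseteq V$. For $(6)\Rightarrow(1)$ I would invoke Proposition~\ref{properties of subsets}(4): given $U\in\nu$ and any $x\in f^{-1}(U)$, condition $(6)$ supplies $W\in\mu_x$ with $f(W)\subseteq U$, so $W\subseteq f^{-1}(U)$, which shows $f^{-1}(U)$ is $\mu$-open. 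Apart from the complementation in $(4)\Rightarrow(5)$, every step is a short and routine translation of the standard continuity characterizations into the language of $GT$-spaces.
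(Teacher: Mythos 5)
Your proof is correct: each link in the cyclic chain $(1)\Rightarrow(2)\Rightarrow(3)\Rightarrow(4)\Rightarrow(5)\Rightarrow(6)\Rightarrow(1)$ is sound, and the two points that need care in the generalized setting are handled properly --- $C(A)$ really is the smallest $\mu$-closed superset of $A$ (arbitrary intersections of $\mu$-closed sets are $\mu$-closed, and $X$ itself is always $\mu$-closed), and the complementation bookkeeping in $(4)\Rightarrow(5)$ via Proposition~\ref{properties of subsets}(6) is done correctly. Note that the paper itself gives no proof of this proposition; it is quoted from Min's paper \cite{M}, so there is no in-paper argument to compare against, but your argument is the standard transfer of the classical continuity characterizations and is exactly what one would expect that reference to contain.
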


Given a subset in a generalized topological space, we can also induce a
subspace of the generalized topological space. Furthermore, the relationship
between a generalized topological space and its subspaces regarding closure
and interior is discussed in the following proposition.

\begin{proposition}
\label{subspaces}Let $(X,\mu )$ be a $GT$-space. If $X^{\prime }$ is a
subset of $X,A$ is a subset of $X^{\prime },$ and put $\mu ^{\prime
}=\{U\bigcap X^{\prime }:U\in \mu \}.$ Then the following properties hold.

(1) $(X^{\prime },\mu ^{\prime })$ is a $GT$-space, which is called a
subspace of $(X,\mu )$.

(2) $C_{X^{\prime }}(A)=C(A)\bigcap X^{\prime }$, where $C_{X^{\prime }}(A)$
is the closure of $A$ in $(X^{\prime },\mu ^{\prime })$.

(3) $I(A)\subseteq I_{X^{\prime }}(A)\bigcap I(X^{\prime })$, where $%
I_{X^{\prime }}(A)$ is the interior of $A$ in $(X^{\prime },\mu ^{\prime })$.
\end{proposition}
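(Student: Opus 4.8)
The plan is to prove the three parts in order, using the characterizations from Proposition \ref{properties of subsets} together with the explicit description of the subspace generalized topology $\mu'=\{U\cap X':U\in\mu\}$.

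For part (1), I would verify the two axioms of a generalized topology directly. Since $\emptyset\in\mu$, we have $\emptyset=\emptyset\cap X'\in\mu'$. For arbitrary unions, given a family $\{U_\alpha\cap X':U_\alpha\in\mu\}$ of elements of $\mu'$, I would use the distributive law $\bigcup_\alpha(U_\alpha\cap X')=\left(\bigcup_\alpha U_\alpha\right)\cap X'$ and observe that $\bigcup_\alpha U_\alpha\in\mu$ by axiom (2) for $\mu$, so the union lies in $\mu'$. This is routine.

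For part (2), the equality $C_{X'}(A)=C(A)\cap X'$, I would argue by double inclusion using the pointwise characterization of closure, namely $x\in C(B)$ iff every $U_x\in\mu_x$ meets $B$ (Proposition \ref{properties of subsets}(7)). The key translation is that a $\mu'$-open neighbourhood of $x\in X'$ has the form $U\cap X'$ with $x\in U\in\mu$. For the inclusion $C_{X'}(A)\subseteq C(A)\cap X'$, take $x\in C_{X'}(A)$; then $x\in X'$, and for any $U\in\mu_x$ the set $U\cap X'$ is a $\mu'$-open neighbourhood of $x$, hence meets $A$, so $U$ meets $A$ and $x\in C(A)$. For the reverse inclusion, take $x\in C(A)\cap X'$; any basic $\mu'$-neighbourhood of $x$ is $U\cap X'$ with $x\in U\in\mu$, and since $x\in C(A)$ the set $U$ meets $A\subseteq X'$, giving $(U\cap X')\cap A\neq\emptyset$, so $x\in C_{X'}(A)$. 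Here $A\subseteq X'$ is used crucially so that $U\cap A=(U\cap X')\cap A$.

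For part (3), I would prove only the inclusion $I(A)\subseteq I_{X'}(A)\cap I(X')$ (equality can fail, which is presumably why the statement is an inclusion, and this is the subtle point to keep in mind). Using Proposition \ref{properties of subsets}(8), $x\in I(A)$ means $U_x\subseteq A$ for some $U_x\in\mu_x$. Since $A\subseteq X'$, this same $U_x\subseteq X'$ witnesses $x\in I(X')$; and $U_x\cap X'=U_x$ is a $\mu'$-open set contained in $A$ containing $x$, so $x\in I_{X'}(A)$. The main obstacle here is conceptual rather than computational: one must resist asserting the reverse inclusion, since a set that is interior relative to the subspace need not be interior in the whole space, so only the stated one-sided inclusion holds. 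I would close by noting that each step reduces to the pointwise neighbourhood characterizations already established, so no further machinery is needed.
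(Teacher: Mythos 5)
Your proposal is correct and follows essentially the same route as the paper: part (2) by double inclusion via the pointwise characterization of closure (Proposition \ref{properties of subsets} (7)) with the translation of $\mu'$-neighbourhoods as $U\cap X'$ and the crucial use of $A\subseteq X'$, and part (3) by exhibiting a $\mu$-open witness $U_x\subseteq A\subseteq X'$ that also serves as a $\mu'$-open witness. The only difference is that you spell out part (1), which the paper dismisses as clear, and your caution about the failure of equality in (3) matches the counterexample given in the remark following the proposition.
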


\begin{proof}
(1) It is clear.

(2) Let $x\in C_{X^{\prime }}(A)$. For each $U\in \mu _{x}$, $U\bigcap
X^{\prime }\in \mu _{x}^{\prime }$. By Proposition \ref{properties of
subsets} (7), $(U\bigcap X^{\prime })\bigcap A\neq \emptyset $, and so $%
U\bigcap A\neq \emptyset $. Thus $x\in C(A)$. Note that $x\in X^{\prime }$.
So $x\in C(A)\bigcap X^{\prime }$. On the other hand, let $x\in C(A)\bigcap
X^{\prime },$ then $x\in X^{\prime }$ and $x\in C(A).$ For each $U^{\prime
}\in \mu _{x}^{\prime }$, there is $U\in \mu _{x}$ such that $U^{\prime
}=U\bigcap X^{\prime }$. By Proposition \ref{properties of subsets} (7), $%
U\bigcap A\neq \emptyset $. Notice that $A\subseteq X^{\prime }$. So $%
U^{\prime }\bigcap A=(U\bigcap X^{\prime })\bigcap A=U\bigcap A\neq
\emptyset .$ Thus $x\in C_{X^{\prime }}(A)$.

(3) From Proposition \ref{properties of subsets} (3), $I(A)\subseteq
I(X^{\prime })$ because $A\subseteq X^{\prime }$. So we only need to prove
that $I(A)\subseteq I_{X^{\prime }}(A)$. Let $x\in I(A)$, then there is $%
U\in \mu _{x}$ such that $x\in U\subseteq A\subseteq X^{\prime }$. It
suffices to prove that $x\in I_{X^{\prime }}(A)$. In fact, since $U\bigcap
X^{\prime }=U$, $U\in \mu _{x}^{\prime }$, and so $x\in I_{X^{\prime }}(A)$.
\end{proof}

\begin{remark}
(1) In Proposition \ref{subspaces} (3), \textquotedblleft $\subseteq $%
\textquotedblright \ can not be replaced by \textquotedblleft $=$%
\textquotedblright . In fact, let $X=\{x,y,z\}$, $\mu
=\{X,\{x,y\},\{y,z\},\phi \}$, $X^{\prime }=\{x,y\}$ and $\mu ^{\prime
}=\{X^{\prime },\{y\},\phi \}$. Then $(X^{\prime },\mu ^{\prime })$ is a
subspace of $(X,\mu )$. According to the definition, $I(\{y\})=$ the union
of all $\mu -$open subsets of $\{y\}=$ $\emptyset $. Similar, $I_{X^{\prime
}}(\{y\})=\{y\}$ and $I(X^{\prime })=\{x,y\}$. So $I_{X^{\prime
}}(\{y\})\bigcap I(X^{\prime })=\{y\} \bigcap \{x,y\}=\{y\} \neq I(\{y\})$.

(2 ) Given a subspace $(X^{\prime },\mu ^{\prime })$ of a $GT$-space $(X,\mu
), $ it is clear that $B$ is a $\mu ^{\prime }-$closed subset of $(X^{\prime
},\mu ^{\prime })$ if and only if there is a $\mu -$closed subset $F$ of $%
(X,\mu )$ such that $B=F\cap X^{\prime }.$
\end{remark}

\begin{definition}
Let $f:(X,\mu )\longrightarrow (Y,\nu )$ be a mapping and $X^{\prime
}\subseteq X$. Then $h:(X^{\prime },\mu ^{\prime })\longrightarrow
(f(X^{\prime }),\nu ^{\prime })$ is called a restriction of $f$ on $%
(X^{\prime },\mu ^{\prime })$, if $h(x)=f(x)$ for each $x\in X^{\prime }$,
where $\mu ^{\prime }=\{U\bigcap X^{\prime }:U\in \mu \}$ and $\nu ^{\prime
}=\{V\bigcap f(X^{\prime }):V\in \nu \}$. As usual, a restriction of $f$ on $%
(X^{\prime },\mu ^{\prime })$ is denoted by $f|_{X^{\prime }}$.
\end{definition}

\begin{proposition}
Let $f:(X,\mu )\longrightarrow (Y,\nu )$ be a mapping and $X^{\prime
}\subseteq X$. If $f$ is $g$-continuous, then its restriction $f|_{X^{\prime
}}$\ on $(X^{\prime },\mu ^{\prime })$ is $g$-continuous.
\end{proposition}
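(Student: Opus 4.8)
The plan is to verify the defining condition of $g$-continuity directly, by unwinding the definitions of $\mu^{\prime}$ and $\nu^{\prime}$ and computing a preimage. Writing $h=f|_{X^{\prime}}:(X^{\prime},\mu^{\prime})\longrightarrow (f(X^{\prime}),\nu^{\prime})$, I must show, according to Definition \ref{g continuous}, that $h^{-1}(W)\in \mu^{\prime}$ for every $W\in \nu^{\prime}$. So I would begin by fixing an arbitrary $W\in \nu^{\prime}$ and using the definition $\nu^{\prime}=\{V\bigcap f(X^{\prime}):V\in \nu\}$ to choose some $V\in \nu$ with $W=V\bigcap f(X^{\prime})$.

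The central step is to identify the preimage $h^{-1}(W)$ in terms of $f^{-1}(V)$. Since $h(x)=f(x)$ for each $x\in X^{\prime}$, and every such $x$ already satisfies $f(x)\in f(X^{\prime})$, the condition $f(x)\in V\bigcap f(X^{\prime})$ is equivalent, for $x\in X^{\prime}$, to the condition $f(x)\in V$. This yields the set-theoretic identity $h^{-1}(W)=f^{-1}(V)\bigcap X^{\prime}$.

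To finish, I would invoke the hypothesis that $f$ is $g$-continuous: since $V\in \nu$, Definition \ref{g continuous} gives $f^{-1}(V)\in \mu$. Then, directly from the definition $\mu^{\prime}=\{U\bigcap X^{\prime}:U\in \mu\}$, the set $f^{-1}(V)\bigcap X^{\prime}$ belongs to $\mu^{\prime}$. Combining this with the identity above shows $h^{-1}(W)\in \mu^{\prime}$, as required.

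I do not expect a genuine obstacle here; the argument is a routine unwinding of definitions. The only point demanding a little care is the preimage identity $h^{-1}(V\bigcap f(X^{\prime}))=f^{-1}(V)\bigcap X^{\prime}$, where one must check that intersecting $V$ with $f(X^{\prime})$ contributes no restriction beyond confining the domain to $X^{\prime}$. This is because $h$ takes values only in $f(X^{\prime})$, so the factor $f(X^{\prime})$ is automatically satisfied by any $x\in X^{\prime}$ and cuts out nothing extra.
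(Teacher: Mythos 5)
Your proof is correct, but it takes a different route from the paper's. You verify the definition of $g$-continuity directly: given $W=V\bigcap f(X^{\prime})\in \nu ^{\prime }$ with $V\in \nu $, you establish the preimage identity $h^{-1}(W)=f^{-1}(V)\bigcap X^{\prime }$ (the factor $f(X^{\prime })$ indeed imposes no extra restriction on points of $X^{\prime }$), and then $f^{-1}(V)\in \mu $ gives $h^{-1}(W)\in \mu ^{\prime }$ immediately from the definition of $\mu ^{\prime }$. The paper instead works through the closure-operator characterization of $g$-continuity (Proposition \ref{characterizations of g continuous} (3)): it computes $h(C_{X^{\prime }}(A))=f(C(A)\bigcap X^{\prime })\subseteq C(f(A))\bigcap f(X^{\prime })=C_{f(X^{\prime })}(f(A))$, relying on the subspace closure formula of Proposition \ref{subspaces} (2). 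Your argument is more elementary and self-contained, using only the definitions of the subspace structures and of $g$-continuity; the paper's version has the side benefit of exercising the closure characterization and the subspace lemma it has just established, but it is logically heavier than necessary. Both are complete proofs.
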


\begin{proof}
Let $f$ $:(X,\mu )\longrightarrow (Y,\nu )$ be $g$-continuous, $A\subseteq
X^{\prime }$, and the restriction $(X^{\prime },\mu ^{\prime })$ be $%
f|_{X^{\prime }}:(X^{\prime },\mu ^{\prime })\longrightarrow (f(X^{\prime
}),\nu ^{\prime }),$where $\mu ^{\prime }=\{U\bigcap X^{\prime }:U\in \mu \}$
and $\nu ^{\prime }=\{V\bigcap f(X^{\prime }):V\in \nu \}$. By Propositions %
\ref{subspaces} (2) and \ref{characterizations of g continuous} (3), so we
have $h(C_{X^{\prime }}(A))=h(C(A)\bigcap X^{\prime })=f(C(A)\bigcap
X^{\prime })\subseteq f(C(A))\bigcap f(X^{\prime })\subseteq C(f(A))\bigcap
f(X^{\prime })=C_{f(X^{\prime })}(f(A)),$where $C_{X^{\prime }}(A)$ and $%
C_{f(X^{\prime })}(f(A))$ are the closures of $A$ in $(X^{\prime },\mu
^{\prime })$ and $f(A)$ in $(f(X^{\prime }),\nu ^{\prime })$ respectively.
Thus $f|_{X^{\prime }}$ is $g$-continuous by Proposition \ref%
{characterizations of g continuous} (3).
\end{proof}

\bigskip

There are many useful mappings defined on topological spaces. Here we
consider the classes of open mappings \cite{E}, closed mappings \cite{E},
pseudo-open mappings \cite{GX}, and quotient mappings \cite{E}. We
generalize these classes of mappings on topological spaces to the classes of 
$g$-open mappings, $g$-closed mappings, $g$-pseudo-open mappings, and $g$%
-quotient mappings respectively, defined on generalized topological spaces
by the following definition.

\begin{definition}
\label{g mappings}Let $f:(X,\mu )\longrightarrow (Y,\nu )$ be a mapping. Then

(1) $f$ is called a $g$-open mapping if $f(U)\in \nu$ for each $U\in \mu$.

(2) $f$ is called a $g$-closed mapping if $f(F)$ is a $\nu$-closed subset of 
$Y$ for each $\mu$-closed subset $F$ of $X$.

(3) $f$ is called a $g$-pseudo-open mapping if for each $y\in Y$ and $%
f^{-1}(y)\subseteq U\in \mu $, then $y\in I(f(U))$.

(4) $f$ is called a $g$-quotient mapping if for each $V\subseteq Y$, $%
f^{-1}(V)\in \mu $ implies $V\in \nu $.
\end{definition}

We note that the generalized quotient map of a generalized quotient topology
(\cite{S}, Definition 3.1) is a $g$-quotient mapping according to our
Definition \ref{g mappings} (4). But a $g$-quotient mapping need not be a
generalized quotient map because that the corresponding generalized topology
need not be a generalized quotient topology. Thus, $g$-quotient mappings
here are generalizations of generalized quotient maps.

Furthermore, we generalize hereditary properties of mappings on topological
spaces \cite{E,L} to the context of generalized topological spaces.

\begin{definition}
\label{hereditarily}Let $f:(X,\mu )\longrightarrow (Y,\nu )$ be a mapping.
Then $f$ is called a hereditarily $g$-open mapping (resp. hereditarily $g$%
-closed mapping, hereditarily $g$-pseudo-open mapping, hereditarily $g$%
-quotient mapping) if $f|_{f^{-1}(Y^{\prime })}:(f^{-1}(Y^{\prime }),\mu
^{\prime })\longrightarrow (Y^{\prime },\nu ^{\prime })$ is $g$-open (resp. $%
g$-closed, $g$-pseudo-open, $g$-quotient) for each $Y^{\prime }\subseteq Y$.
Where $\mu ^{\prime }=\{U\bigcap f^{-1}(Y^{\prime }):U\in \mu \}$ and $\nu
^{\prime }=\{V\bigcap Y^{\prime }:V\in \nu \}.$
\end{definition}

\begin{proposition}
\label{hereditary property}Every $g$-open mapping (resp. $g$-closed mapping, 
$g$-pseudo-open mapping) is hereditarily $g$-open (resp. $g$-closed, $g$%
-pseudo-open).
\end{proposition}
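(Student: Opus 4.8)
The plan is to treat the three assertions in parallel, since each reduces to the same elementary set-theoretic bookkeeping together with the appropriate characterization from the preliminaries. Throughout, fix $Y^{\prime }\subseteq Y$, write $X^{\prime }=f^{-1}(Y^{\prime })$, and let $g=f|_{X^{\prime }}:(X^{\prime },\mu ^{\prime })\longrightarrow (Y^{\prime },\nu ^{\prime })$ be the restriction, where $\mu ^{\prime }=\{U\cap X^{\prime }:U\in \mu \}$ and $\nu ^{\prime }=\{V\cap Y^{\prime }:V\in \nu \}$. The single fact doing the work is the identity
\[
f(U\cap X^{\prime })=f(U)\cap Y^{\prime }\qquad \text{for every } U\subseteq X,
\]
which holds because $f$ is surjective: the inclusion $\subseteq $ is immediate (the left side lies in $f(U)$ and in $f(X^{\prime })=Y^{\prime }$), and for $\supseteq $ any $y\in f(U)\cap Y^{\prime }$ equals $f(u)$ for some $u\in U$, and then $u\in f^{-1}(Y^{\prime })=X^{\prime }$. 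I would establish this identity first, noting that $g$ is again surjective, and then dispatch the three cases.

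For the $g$-open case, take $U^{\prime }\in \mu ^{\prime }$, say $U^{\prime }=U\cap X^{\prime }$ with $U\in \mu $. Then $g(U^{\prime })=f(U)\cap Y^{\prime }$ by the identity; since $f$ is $g$-open we have $f(U)\in \nu $, so $g(U^{\prime })=f(U)\cap Y^{\prime }\in \nu ^{\prime }$ by the definition of $\nu ^{\prime }$. For the $g$-closed case, take a $\mu ^{\prime }$-closed set $F^{\prime }$ in $X^{\prime }$; by part (2) of the Remark there is a $\mu $-closed set $F$ in $X$ with $F^{\prime }=F\cap X^{\prime }$, whence $g(F^{\prime })=f(F)\cap Y^{\prime }$, and since $f(F)$ is $\nu $-closed, the same Remark shows $g(F^{\prime })=f(F)\cap Y^{\prime }$ is $\nu ^{\prime }$-closed.

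The $g$-pseudo-open case is the one requiring a little more care, and is where I expect the main (though still modest) obstacle to lie, because the conclusion involves the subspace interior $I_{Y^{\prime }}$ rather than the ambient $I$, and Proposition \ref{subspaces} (3) gives only an inclusion between the two. Fix $y\in Y^{\prime }$ with $g^{-1}(y)\subseteq U^{\prime }\in \mu ^{\prime }$. Since $y\in Y^{\prime }$ forces $f^{-1}(y)\subseteq X^{\prime }$, we have $g^{-1}(y)=f^{-1}(y)$; writing $U^{\prime }=U\cap X^{\prime }$ with $U\in \mu $, it follows that $f^{-1}(y)\subseteq U^{\prime }\subseteq U$, so the $g$-pseudo-openness of $f$ gives $y\in I(f(U))$. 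By Proposition \ref{properties of subsets} (8) there is $V\in \nu $ with $y\in V\subseteq f(U)$; then $V\cap Y^{\prime }$ is a $\nu ^{\prime }$-open set containing $y$ with $V\cap Y^{\prime }\subseteq f(U)\cap Y^{\prime }=g(U^{\prime })$, so Proposition \ref{properties of subsets} (8), applied now in the subspace $(Y^{\prime },\nu ^{\prime })$, yields $y\in I_{Y^{\prime }}(g(U^{\prime }))$, which is exactly the defining condition for $g$ to be $g$-pseudo-open. The crux throughout is to keep the ambient and subspace operators distinct and to route the pseudo-open argument through the pointwise characterization (8) instead of through the interior operator directly.
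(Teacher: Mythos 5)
Your proof is correct and follows essentially the same route as the paper's: the same image identity $f(U\cap f^{-1}(Y'))=f(U)\cap Y'$ drives the open and closed cases, and the pseudo-open case is handled by observing $g^{-1}(y)=f^{-1}(y)\subseteq U$ and applying the hypothesis to the ambient $\mu$-open set $U$. The only cosmetic difference is the final step of the pseudo-open case, where you extract a witness $V\in \nu$ with $y\in V\subseteq f(U)$ via Proposition \ref{properties of subsets} (8) and intersect with $Y'$, while the paper instead writes $I(f(U))\cap Y'=I_{Y'}(I(f(U))\cap Y')\subseteq I_{Y'}(h(W))$; both are valid.
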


\begin{proof}
Let $f:(X,\mu )\longrightarrow (Y,\nu )$ be a mapping. For each $Y^{\prime
}\subseteq Y$, put $X^{\prime }=f^{-1}(Y^{\prime })$, $\mu ^{\prime
}=\{U\bigcap X^{\prime }:U\in \mu \},\nu ^{\prime }=\{V\bigcap Y^{\prime
}:V\in \nu \},$ and $h=f|_{X^{\prime }}:(X^{\prime },\mu ^{\prime
})\longrightarrow (Y^{\prime },\nu ^{\prime })$. Notice that all mappings in
this paper are assumed to be surjective, hence $f(X^{\prime })=Y^{\prime }.$

(1) Assume that $f$ is $g$-open. Let $W\in \mu ^{\prime }$, then there is $%
U\in \mu $ such that $W=U\bigcap X^{\prime }$. So $h(W)=h(U\bigcap X^{\prime
})=f(U\bigcap f^{-1}(Y^{\prime }))=f(U)\bigcap f(f^{-1}(Y^{\prime
})))=f(U)\bigcap Y^{\prime }. $ Since $f$ is $g$-open, $f(U)\in \nu $, so $%
h(W)\in \nu ^{\prime }$. This shows that $h$ is $g$-open, so that $f$ is
hereditarily $g$-open.

(2) Assume that $f$ is $g$-closed. Let $F$ is a $\mu ^{\prime }$-closed
subset of $(X^{\prime },\mu ^{\prime })$. By Propositions \ref{properties of
subsets} (5) and \ref{subspaces} (2), there is a $\mu $-closed subset $E$ of 
$(X,\mu )$ such that $F=E\bigcap X^{\prime }$. So $h(F)=h(E\bigcap X^{\prime
})=f(E\bigcap f^{-1}(Y^{\prime }))=f(E)\bigcap Y^{\prime }$. Since $f$ is $g$%
-closed, $f(E)$ is a $\nu $-closed subset of $(Y,\nu )$, so $h(F)$ is a $\nu
^{\prime }$-closed subset of $(Y^{\prime },\nu ^{\prime })$. This shows that 
$h$ is $g$-closed, so that $f$ is hereditarily $g$-closed.

(3) Assume that $f$ is $g$-pseudo-open. Let $y\in Y^{\prime }$ and $%
h^{-1}(y)\subseteq W\in \mu ^{\prime }$, then there is $U\in \mu $ such that 
$W=U\bigcap X^{\prime }$. Since $h^{-1}(y)\subseteq W\subseteq X^{\prime }$
and $h=f|_{X^{\prime }},$ $f^{-1}(y)=h^{-1}(y)$. So $f^{-1}(y)\subseteq
W\subseteq U$. Since $f$ is $g$-pseudo-open, $y\in I(f(U)),$ hence $y\in
I(f(U))\bigcap Y^{\prime }.$ Since $I(f(U))\bigcap Y^{\prime }\in \nu
^{\prime },I(f(U))\bigcap Y^{\prime }=I_{Y^{\prime }}(I(f(U))\bigcap
Y^{\prime }). $ On the other hand, from Proposition \ref{properties of
subsets} (3), we have $I_{Y^{\prime }}(I(f(U))\bigcap Y^{\prime })\subseteq
I_{Y^{\prime }}(f(U)\bigcap Y^{\prime })=I_{Y^{\prime }}(f(U\bigcap
X^{\prime }))=I_{Y^{\prime }}(h(U\bigcap X^{\prime }))=I_{Y^{\prime }}(h(W))$
which gives $y\in I_{Y^{\prime }}(h(W)).$ This proves that $h$ is $g$%
-pseudo-open, so that $f$ is hereditarily $g$-pseudo-open.
\end{proof}

\begin{remark}
A $g$-quotient mapping need not be hereditarily $g$-quotient. In fact, we
will show [Theorem \ref{characterizations of g pseudo open}] that a mapping
is $g$-pseudo-open if and only if it is hereditarily $g$-quotient, but a $g$%
-quotient mapping need not be a $g$-pseudo-open mapping by Remark \ref%
{reverse relationship}.
\end{remark}

\section{The Main Results}

In this section, we obtain characterizations of $g-$open mappings, $g-$%
closed mappings, $g-$pseudo-open mappings, and $g-$quotient mappings on
generalized topological spaces.

\begin{theorem}
\label{characterizations of g open}Let $f:(X,\mu )\longrightarrow (Y,\nu )$
be a mapping. Then the following conditions are equivalent.

(1) $f$ is a $g$-open mapping.

(2) If $B\subseteq Y$, then $f^{-1}(C(B))\subseteq C(f^{-1}(B))$.

(3) If $A\subseteq X$, then $f(I(A))\subseteq I(f(A))$.

(4) For each $x\in X,$ if $x\in U\in \mu $, then $f(x)\in V\subseteq f(U)$
for some $V\in \nu $.
\end{theorem}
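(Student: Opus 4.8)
The plan is to prove the four conditions equivalent by establishing the cycle $(1)\Rightarrow(2)\Rightarrow(3)\Rightarrow(4)\Rightarrow(1)$. Three of these four links are essentially direct unwindings of the definitions together with Proposition \ref{properties of subsets}, while the step $(2)\Rightarrow(3)$ is the one requiring a genuine idea (a complementation argument), so I would organize the write-up to dispatch the direct links first and spend care on that one.

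For $(1)\Rightarrow(2)$ I would argue pointwise: given $B\subseteq Y$ and $x\in f^{-1}(C(B))$, so that $f(x)\in C(B)$, I would fix an arbitrary $U\in\mu_{x}$ and use $g$-openness to get $f(U)\in\nu$ with $f(x)\in f(U)$; Proposition \ref{properties of subsets}~(7) applied to $f(x)\in C(B)$ then yields a point $y\in f(U)\cap B$, whose preimage in $U$ lies in $U\cap f^{-1}(B)$. Since $U\in\mu_{x}$ was arbitrary, Proposition \ref{properties of subsets}~(7) gives $x\in C(f^{-1}(B))$. For $(3)\Rightarrow(4)$ I would apply $(3)$ to $A=U$ where $x\in U\in\mu$; since $U$ is $\mu$-open, $I(U)=U$, so $f(U)=f(I(U))\subseteq I(f(U))\subseteq f(U)$ forces $I(f(U))=f(U)$, i.e. $V:=f(U)\in\nu$, and this $V$ works since $f(x)\in f(U)=V\subseteq f(U)$. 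For $(4)\Rightarrow(1)$, given $U\in\mu$ I would choose for each $x\in U$ a set $V_{x}\in\nu$ with $f(x)\in V_{x}\subseteq f(U)$; then $f(U)=\bigcup_{x\in U}V_{x}$ is a union of $\nu$-open sets, hence $\nu$-open by axiom (2) of a generalized topology, giving $f(U)\in\nu$.

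The main obstacle is $(2)\Rightarrow(3)$, where the hypothesis speaks of preimages of closures but the goal speaks of images of interiors; I would bridge the gap by dualizing through complements using Proposition \ref{properties of subsets}~(6) and surjectivity of $f$. Concretely, given $A\subseteq X$ I would apply $(2)$ to $B=Y-f(A)$. Using $C(Y-f(A))=Y-I(f(A))$ and $f^{-1}(Y-S)=X-f^{-1}(S)$, the hypothesis $f^{-1}(C(B))\subseteq C(f^{-1}(B))$ rewrites as $X-f^{-1}(I(f(A)))\subseteq C\bigl(X-f^{-1}(f(A))\bigr)$; taking complements in $X$ and invoking Proposition \ref{properties of subsets}~(6) again turns this into $I\bigl(f^{-1}(f(A))\bigr)\subseteq f^{-1}(I(f(A)))$. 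Since $A\subseteq f^{-1}(f(A))$, monotonicity of $I$ (Proposition \ref{properties of subsets}~(3)) gives $I(A)\subseteq f^{-1}(I(f(A)))$, and applying $f$ together with $f(f^{-1}(S))\subseteq S$ yields the desired $f(I(A))\subseteq I(f(A))$. The care needed here is purely bookkeeping with the complement identities, and I would double-check that only the inclusion $f(f^{-1}(S))\subseteq S$ (which holds without surjectivity) and monotonicity are used so no hidden hypotheses creep in.

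With these four implications in hand the theorem follows, the cyclic arrangement ensuring each of $(1)$--$(4)$ implies every other. I expect the exposition to be short for the three direct links and to concentrate the reader's attention on the complementation identities in $(2)\Rightarrow(3)$.
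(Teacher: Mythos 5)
Your proposal is correct and follows essentially the same route as the paper: the pointwise argument via Proposition \ref{properties of subsets}~(7) for $(1)\Rightarrow(2)$, the complementation/duality argument through $I(f^{-1}f(A))$ for $(2)\Rightarrow(3)$, and the union argument for $(4)\Rightarrow(1)$ all match the paper's proofs, with the only difference being that you close the loop as $(3)\Rightarrow(4)$ (showing $f(U)=I(f(U))$ and taking $V=f(U)$) where the paper proves $(3)\Rightarrow(1)$ and handles $(4)$ by a separate two-way argument. This is a cosmetic reorganization, not a different method.
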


\begin{proof}
(1) $\Longrightarrow $ (2): Assume that $f$ is a $g$-open mapping.\ Let $%
x\in f^{-1}(C(B))$, then $f(x)\in ff^{-1}(C(B))=C(B)$. On the other hand,
whenever $U_{x}\in \mu _{x}$ implies $f(x)\in f(U_{x}),$ then $f(U_{x})\in
\nu _{f(x)}$ because $f$ is a $g$-open mapping. It follows from Proposition %
\ref{properties of subsets} (7) that $f(U_{x})\bigcap B\neq \emptyset .$
Choose $y\in f(U_{x})\bigcap B$. Then there is $x^{\prime }\in U_{x}$ such
that $y=f(x^{\prime })\in B$, i.e., $x^{\prime }\in f^{-1}(B)$. Therefore $%
x^{\prime }\in U\cap f^{-1}(B)\neq \emptyset $ which gives $x\in
C(f^{-1}(B)) $ by using Proposition \ref{properties of subsets} (7).

(2) $\Longrightarrow $ (3): Assume that the condition (2) holds. Let $%
A\subseteq X$. By Proposition \ref{properties of subsets} (1), $%
I(A)\subseteq A\subseteq f^{-1}f(A)$, so $I(A)\subseteq I(f^{-1}f(A))$. By
Proposition \ref{properties of subsets} (6), $%
I(f^{-1}f(A))=X-C(X-f^{-1}f(A))=X-C(f^{-1}(Y-f(A))),$ so $I(A)\subseteq
X-C(f^{-1}(Y-f(A))).$ Since the condition (2) holds, $%
f^{-1}(C(Y-f(A)))=C(f^{-1}(Y-f(A)))$, and hence $I(A)\subseteq
X-f^{-1}(C(Y-f(A)))=f^{-1}(Y-C(Y-f(A)))=f^{-1}(I(f(A)))$. It follows that $%
f(I(A))\subseteq ff^{-1}(I(f(A)))=I(f(A))$.

(3) $\Longrightarrow $ (1): Assume that the condition (3) holds. Let $U\in
\mu $, then $I(U)=U$, and $f(U)=f(I(U)\subseteq I(f(U))$. On the other hand, 
$I(f(U))\subseteq f(U)$ by using Proposition \ref{properties of subsets}
(1). Thus, $I(f(U))=f(U)$ which means $f(U)\in \nu $ by Proposition \ref%
{properties of subsets} (4). This proves that $f$ is a $g$-open mapping.

(1) $\Longrightarrow $ (4): Assume that $f$ is a $g$-open mapping. For each $%
x\in X,$ if $x\in U\in \mu $, then $f(x)\in f(U)\in \nu ,$ for $V=f(U).$

(4) $\Longrightarrow $ (1): Assume that the condition (4) holds. Let $U\in
\mu $. For each $y\in f(U)$, there is $x_{y}\in U$ such that $y=f(x_{y})$.
So there is $V_{y}\in \nu $ such that $y\in V_{y}\subseteq f(U)$. It follows
that $f(U)=\bigcup \{V_{y}:y\in f(U)\},$ and $\bigcup \{V_{y}:y\in f(U)\} \in
\nu $ because $Y$ is a $GT$. This proves that $f$ is a $g$-open mapping.
\end{proof}

From Theorem \ref{characterizations of g open} and Proposition \ref%
{characterizations of g continuous} above, the next Corollary holds
immediately.

\begin{corollary}
Let $f:(X,\mu )\longrightarrow (Y,\nu )$ be a $g$-continuous mapping. Then
the following conditions are equivalent.

(1) $f$ is a $g$-open mapping.

(2) If $B\subseteq Y$, then $f^{-1}(C(B))=C(f^{-1}(B))$.
\end{corollary}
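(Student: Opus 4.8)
The plan is to read off this corollary as an immediate consequence of the two characterization results already in hand, by observing that set equality decomposes into two opposite inclusions, each of which is supplied by a separate hypothesis. Specifically, Theorem \ref{characterizations of g open} tells us that $g$-openness is equivalent to the inclusion $f^{-1}(C(B))\subseteq C(f^{-1}(B))$ for every $B\subseteq Y$ (its condition (2)), while Proposition \ref{characterizations of g continuous} tells us that $g$-continuity yields the reverse inclusion $C(f^{-1}(B))\subseteq f^{-1}(C(B))$ for every $B\subseteq Y$ (its condition (4)). The whole argument is the interplay between these two facts.

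For the direction $(1)\Longrightarrow(2)$, I would assume $f$ is $g$-open and fix an arbitrary $B\subseteq Y$. By Theorem \ref{characterizations of g open} we obtain $f^{-1}(C(B))\subseteq C(f^{-1}(B))$. Since $f$ is $g$-continuous by hypothesis of the corollary, Proposition \ref{characterizations of g continuous} supplies $C(f^{-1}(B))\subseteq f^{-1}(C(B))$. Combining the two inclusions gives $f^{-1}(C(B))=C(f^{-1}(B))$, which is exactly (2). Here the role of $g$-continuity is precisely to furnish the inclusion that is not available for an arbitrary mapping; without it one only has the single inclusion characterizing $g$-openness.

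For the direction $(2)\Longrightarrow(1)$, I would assume $f^{-1}(C(B))=C(f^{-1}(B))$ for every $B\subseteq Y$. An equality trivially implies the one-sided inclusion $f^{-1}(C(B))\subseteq C(f^{-1}(B))$, and this is condition (2) of Theorem \ref{characterizations of g open}, so that theorem immediately returns that $f$ is a $g$-open mapping. Notice that this direction does not even invoke the $g$-continuity hypothesis; it is built into the statement only so that both inclusions coexist in the forward direction. I do not expect any genuine obstacle in this proof: the single nontrivial idea is recognizing that the equality in (2) is the conjunction of the $g$-open inclusion and the $g$-continuous inclusion, after which the result follows by citing Theorem \ref{characterizations of g open} and Proposition \ref{characterizations of g continuous} directly.
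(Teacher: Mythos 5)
Your proof is correct and is precisely the argument the paper intends: the paper states the corollary "holds immediately" from Theorem \ref{characterizations of g open} and Proposition \ref{characterizations of g continuous}, and your decomposition of the equality into the $g$-open inclusion (Theorem, condition (2)) and the $g$-continuity inclusion (Proposition, condition (4)) is exactly that omitted argument. Your observation that the reverse direction does not use $g$-continuity is also accurate.
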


The following theorem characterizing $g$-closed mappings in generalized
topological spaces is a generalization of the closed mapping theorem in
topological spaces (see \cite[Theorem 1.4.13]{E} or \cite[Lemma 2.3]{GY},
for example)

\begin{theorem}
\label{characterizations of g closed}Let $f:(X,\mu )\longrightarrow (Y,\nu )$
be a mapping. Then the following conditions are equivalent.

(1) $f$ is a $g$-closed mapping.

(2) If $A\subseteq X$, then $C(f(A))\subseteq f(C(A)).$

(3) If $B\subseteq Y$ and $U\in \mu $ such that $f^{-1}(B)\subseteq U$, then
there is $V\in \nu $ such that $B\subseteq V$ and $f^{-1}(V)\subseteq U$.

(4) If $y\in Y$ and $U\in \mu $ such that $f^{-1}(y)\subseteq U$, then there
is $V\in \nu $ such that $y\in V$ and $f^{-1}(V)\subseteq U$.
\end{theorem}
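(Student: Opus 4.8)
The plan is to establish the equivalence in two parts: first the short cycle $(1)\Longrightarrow(2)\Longrightarrow(1)$ linking the two closure conditions, then the cycle $(1)\Longrightarrow(3)\Longrightarrow(4)\Longrightarrow(1)$ that folds in the two ``covering'' conditions. For $(1)\Longrightarrow(2)$ I would start from an arbitrary $A\subseteq X$, note that $C(A)$ is $\mu$-closed by Proposition \ref{properties of subsets}(5), so that $f(C(A))$ is $\nu$-closed by hypothesis; since $f(A)\subseteq f(C(A))$ and $C(f(A))$ is the smallest $\nu$-closed set containing $f(A)$, the inclusion $C(f(A))\subseteq f(C(A))$ follows at once. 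Conversely, for $(2)\Longrightarrow(1)$ I would take a $\mu$-closed $F$, so $C(F)=F$, and apply (2) to $A=F$ to get $C(f(F))\subseteq f(C(F))=f(F)$; combined with $f(F)\subseteq C(f(F))$ from Proposition \ref{properties of subsets}(1) this forces $C(f(F))=f(F)$, i.e.\ $f(F)$ is $\nu$-closed. These two halves are routine once the right basic identities are quoted.

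For $(1)\Longrightarrow(3)$ the idea is to pass to complements. Given $B\subseteq Y$ and $U\in\mu$ with $f^{-1}(B)\subseteq U$, I set $V=Y-f(X-U)$. Since $X-U$ is $\mu$-closed and $f$ is $g$-closed, $f(X-U)$ is $\nu$-closed, hence $V\in\nu$. The two required properties are then verified by chasing elements: $B\cap f(X-U)=\emptyset$, because any common point would pull back to a point of $f^{-1}(B)\subseteq U$ that also lies in $X-U$; and $f^{-1}(V)=X-f^{-1}(f(X-U))\subseteq X-(X-U)=U$, using the general inclusion $X-U\subseteq f^{-1}(f(X-U))$. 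This gives $B\subseteq V$ and $f^{-1}(V)\subseteq U$, as desired. The step $(3)\Longrightarrow(4)$ is immediate upon specializing $B=\{y\}$.

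The heart of the argument is $(4)\Longrightarrow(1)$, and here I would use the pointwise characterization of $\nu$-closed sets in Proposition \ref{properties of subsets}(5). Let $F$ be $\mu$-closed; to show $f(F)$ is $\nu$-closed it suffices to produce, for each $y\in Y-f(F)$, some $V\in\nu$ with $y\in V$ and $V\cap f(F)=\emptyset$. Given such a $y$, the fibre $f^{-1}(y)$ misses $F$ (a point in the intersection would force $y\in f(F)$), so $f^{-1}(y)\subseteq X-F=:U\in\mu$. Condition (4) then supplies $V\in\nu$ with $y\in V$ and $f^{-1}(V)\subseteq U=X-F$, and one checks $V\cap f(F)=\emptyset$: a common point would be the image of some $x\in F$ with $x\in f^{-1}(V)\subseteq X-F$, a contradiction. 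Hence $f(F)$ is $\nu$-closed and $f$ is $g$-closed.

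The main obstacle I anticipate is this last implication, where the complement and preimage identities must be handled with care: the inclusion $X-U\subseteq f^{-1}(f(X-U))$ in $(1)\Longrightarrow(3)$ and the fibre disjointness $f^{-1}(y)\cap F=\emptyset$ in $(4)\Longrightarrow(1)$ are exactly the points where a careless set-theoretic computation could slip. Everything else reduces to the definition of a $g$-closed mapping together with the basic interior/closure facts already recorded in Proposition \ref{properties of subsets}.
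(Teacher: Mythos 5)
Your proposal is correct and follows essentially the same route as the paper: the cycle $(1)\Leftrightarrow(2)$ via the minimality of closure, then $(1)\Rightarrow(3)\Rightarrow(4)\Rightarrow(1)$ with the same key construction $V=Y-f(X-U)$ and the same fibre-disjointness argument for $(4)\Rightarrow(1)$. The only cosmetic difference is that you verify $B\subseteq V$ by element-chasing where the paper uses complement inclusions; the substance is identical.
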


\begin{proof}
(1) $\Longrightarrow $ (2): Assume that $f$ is a $g$-closed mapping. Let $%
A\subseteq X$, then $C(A)$ is a $\mu $-closed subset of $X$, and so $f(C(A))$
is a $\nu $-closed subset of $Y$. Since $A\subseteq C(A)$, $f(A)\subseteq
f(C(A))$, hence $C(f(A))\subseteq f(C(A))$ by using Proposition \ref%
{properties of subsets} (3).

(2) $\Longrightarrow $ (1): Assume that the condition (2) holds. Let $A$ be
a $\mu $-closed subset of $X$. Then $C(A)=A$, and $f(A)=f(C(A))=C(f(A))$. By
Proposition \ref{properties of subsets} (5), $f(A)$ is a $\nu $-closed
subset of $Y$. This proves that $f$ is a $g$-closed mapping

(1) $\Longrightarrow $ (3): Assume that $f$ is a $g$-closed mapping. Let $%
B\subseteq Y$ and $U\in \mu $ such that $f^{-1}(B)\subseteq U$. Put $%
V=Y-f(X-U)$. It suffices to check the following three claims.

Claim 1. $V\in \nu $.

Since $X-U$ is a $\mu $-closed subset of $X$ and $f$ is $g$-closed, so $%
f(X-U)$ is a $\nu $-closed subset of $Y$. It follows that $V=Y-f(X-U)\in \nu 
$.

Claim 2. $B\subseteq V$.

$f^{-1}(B)\subseteq U$ implies $X-U\subseteq X-f^{-1}(B)$, hence $%
f(X-U)\subseteq f(X-f^{-1}(B))=f(f^{-1}(Y-B))=Y-B$. It follows that $%
B\subseteq Y-f(X-U)=V$.

Claim 3. $f^{-1}(V)\subseteq U$.

$f^{-1}(V)=f^{-1}(Y-f(X-U))=f^{-1}(Y)-f^{-1}(f(X-U))\subseteq X-(X-U)=U$.

(3) $\Longrightarrow $ (4): It is clear.

(4) $\Longrightarrow $ (1): Assume that the condition (3) holds. Let $F$ be
a $\mu $-closed subset of $X$. We only need to prove that $f(F)$ is a $\nu $%
-closed subset of $Y$.

Let $y\in Y-f(F),$ then $f^{-1}(y)\bigcap F=\emptyset $, i.e., $%
f^{-1}(y)\subseteq X-F.$ Notice that $X-F\in \mu ,$ then by (4) there is $%
V\in \nu $ such that $y\in V$ and $f^{-1}(V)\subseteq X-F,$ i.e., $%
f^{-1}(V)\bigcap F=\emptyset $. It follows that $V\bigcap
f(F)=ff^{-1}(V)\bigcap f(F)=f(f^{-1}(V)\bigcap F)=f(\emptyset )=\emptyset $.
By Proposition \ref{properties of subsets} (5), $f(F)$ is a $\mu $-closed
subset of $Y$.
\end{proof}

The next Corollary holds immediately from Theorem \ref{characterizations of
g closed} and Proposition \ref{characterizations of g continuous} above.

\begin{corollary}
Let $f:(X,\mu )\longrightarrow (Y,\nu )$ be a mapping. Then the following
conditions are equivalent.

(1) $f$ is a $g$-closed mapping.

(2) If $A\subseteq X$, then $C(f(A))=f(C(A))$.
\end{corollary}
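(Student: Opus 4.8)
The plan is to regard the asserted equality $C(f(A))=f(C(A))$ as the conjunction of the two inclusions $C(f(A))\subseteq f(C(A))$ and $f(C(A))\subseteq C(f(A))$, and then to recognize each inclusion as a characterization already in hand. By Theorem \ref{characterizations of g closed} (2), requiring $C(f(A))\subseteq f(C(A))$ for every $A\subseteq X$ is exactly equivalent to $f$ being $g$-closed; by Proposition \ref{characterizations of g continuous} (3), requiring $f(C(A))\subseteq C(f(A))$ for every $A\subseteq X$ is exactly equivalent to $f$ being $g$-continuous. So the whole corollary amounts to the statement that these two one-sided characterizations combine into an equality, and the proof should consist of matching each half of (2) to the appropriate earlier result.

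For (2)$\Longrightarrow$(1) I would argue directly: if $C(f(A))=f(C(A))$ holds for every $A\subseteq X$, then in particular $C(f(A))\subseteq f(C(A))$ holds for every such $A$, and the implication (2)$\Longrightarrow$(1) of Theorem \ref{characterizations of g closed} then forces $f$ to be $g$-closed. This half uses only the $g$-closedness characterization and nothing about continuity.

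For (1)$\Longrightarrow$(2) I would start from $g$-closedness and invoke the implication (1)$\Longrightarrow$(2) of Theorem \ref{characterizations of g closed} to obtain $C(f(A))\subseteq f(C(A))$ for each $A\subseteq X$. At this point I expect the main obstacle: $g$-closedness supplies only this one inclusion, so the reverse inclusion $f(C(A))\subseteq C(f(A))$ cannot be extracted from $g$-closedness and must be produced separately. The natural source (and, given the cited dependence on Proposition \ref{characterizations of g continuous}, the evidently intended one) is $g$-continuity, which through Proposition \ref{characterizations of g continuous} (3) yields precisely $f(C(A))\subseteq C(f(A))$; intersecting the two inclusions then delivers the equality in (2).

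The delicate point is therefore hypothesis bookkeeping rather than any calculation. Because $g$-closedness gives one inclusion and $g$-continuity the other, the equality $C(f(A))=f(C(A))$ genuinely encodes both properties, so to make (1)$\Longrightarrow$(2) go through one needs the $g$-continuity of $f$ as a standing assumption, exactly as in the preceding $g$-open corollary, where $g$-continuity is likewise what upgrades a single inclusion to an equality. With that hypothesis in force the argument is immediate, whereas the converse direction (2)$\Longrightarrow$(1) holds unconditionally.
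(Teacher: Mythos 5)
Your proof is correct and is exactly the argument the paper intends: the inclusion $C(f(A))\subseteq f(C(A))$ comes from Theorem \ref{characterizations of g closed} (2), and the reverse inclusion from Proposition \ref{characterizations of g continuous} (3), which is precisely the pair of references the paper cites. You are also right about the hypothesis bookkeeping: as printed the corollary omits the words ``$g$-continuous'' (which do appear in the analogous corollary for $g$-open mappings), and without that assumption (1) does not imply (2) --- for instance the identity map from a two-point indiscrete space to the same set with the discrete generalized topology is $g$-closed but sends $C(\{a\})=X$ onto all of $Y$ while $C(f(\{a\}))=\{a\}$.
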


We now discuss the characterization of $g$-quotient mappings. We then show
the characterization of $g$-pseudo-open mappings.

\begin{theorem}
\label{characterizations of g quotient}Let $f:(X,\mu )\longrightarrow (Y,\nu
)$ be a mapping. Then the following conditions are equivalent.

(1) $f$ is a $g$-quotient mapping.

(2) For each subset $F$ of $Y$, if $f^{-1}(F)$ is a $\mu $-closed subset of $%
(X,\mu ),$ then $F$ is a $\nu $-closed subset of $(Y,\nu ).$
\end{theorem}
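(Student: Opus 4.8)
The plan is to prove the equivalence (1)$\Longleftrightarrow$(2) directly in both directions, exploiting the duality between $\mu$-open and $\mu$-closed sets via complementation, together with the fact that taking preimages under $f$ commutes with complementation.

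First I would prove (1)$\Longrightarrow$(2). Assume $f$ is $g$-quotient, and let $F\subseteq Y$ with $f^{-1}(F)$ being $\mu$-closed in $X$. The idea is to pass to complements and apply the $g$-quotient hypothesis to the set $Y-F$. Since $f^{-1}(F)$ is $\mu$-closed, its complement $X-f^{-1}(F)$ is $\mu$-open, i.e. $X-f^{-1}(F)\in\mu$. The key computation is that preimage commutes with complement, so $X-f^{-1}(F)=f^{-1}(Y-F)$, and therefore $f^{-1}(Y-F)\in\mu$. Now the $g$-quotient property applied to the subset $V=Y-F$ yields $Y-F\in\nu$, which says exactly that $F$ is $\nu$-closed in $Y$, as required.

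The converse (2)$\Longrightarrow$(1) is entirely symmetric. Assume condition (2), and suppose $V\subseteq Y$ satisfies $f^{-1}(V)\in\mu$. I would set $F=Y-V$ and again use the complement identity $f^{-1}(F)=f^{-1}(Y-V)=X-f^{-1}(V)$. Since $f^{-1}(V)$ is $\mu$-open, its complement $f^{-1}(F)$ is $\mu$-closed, so hypothesis (2) gives that $F=Y-V$ is $\nu$-closed, which means $V\in\nu$. This establishes the $g$-quotient property and closes the equivalence.

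I do not expect any serious obstacle here: the entire argument rests on the two elementary facts that $f^{-1}(Y-S)=X-f^{-1}(S)$ for every $S\subseteq Y$, and that a set is $\mu$-open precisely when its complement is $\mu$-closed (and dually in $Y$). The only point requiring a little care is bookkeeping the complements correctly when moving between $X$ and $Y$, but since $f$ is assumed surjective and the definitions of $g$-quotient mapping and of $\nu$-closed sets are stated via complementation, the translation is immediate. The proof is therefore short and formal, with the complement duality doing all the work in both directions.
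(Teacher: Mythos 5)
Your proposal is correct and follows essentially the same route as the paper: both directions are handled by the complementation identity $f^{-1}(Y-S)=X-f^{-1}(S)$ together with the duality between $\mu$-open and $\mu$-closed sets. The only difference is that you write out the converse direction explicitly, whereas the paper dismisses it as "the same method."
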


\begin{proof}
(1) $\Longrightarrow $ (2): Assume that $f$ is a $g$-quotient mapping. Let $%
F\subseteq Y$ such that $f^{-1}(F)$ is a $\mu $-closed subset of $(X,\mu )$.
Then $f^{-1}(Y-F)=X-f^{-1}(F)\in \mu $. Since $f$ is a $g$-quotient mapping, 
$Y-F\in \nu $, and so $F$ is a $\nu $-closed subset of $(Y,\nu ).$

(2) $\Longrightarrow $ (1). It can be proved by the same method.
\end{proof}

\begin{theorem}
\label{characterizations of g pseudo open}Let $f:(X,\mu )\longrightarrow
(Y,\nu )$ be a mapping. Then the following conditions are equivalent.

(1) $f$ is a $g$-pseudo-open mapping.

(2) $f$ is a hereditarily $g$-quotient mapping.

(3) If $B\subseteq Y$, then $C(B)\subseteq f(C(f^{-1}(B))).$
\end{theorem}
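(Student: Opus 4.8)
The plan is to treat condition (1) as the hub and establish the four implications (1)$\Rightarrow$(3), (3)$\Rightarrow$(1), (1)$\Rightarrow$(2), and (2)$\Rightarrow$(1); together these give all three equivalences. The bridge between (1) and (3) is essentially interior--closure duality together with surjectivity ($ff^{-1}=\mathrm{id}$), while the bridge between (1) and (2) requires choosing the right subspace $Y^{\prime }\subseteq Y$ so that the hereditary $g$-quotient hypothesis can be applied to $f|_{f^{-1}(Y^{\prime})}$.

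For (1)$\Rightarrow$(3), I would fix $B\subseteq Y$ and $y\in C(B)$ and argue by contradiction: if $y\notin f(C(f^{-1}(B)))$ then $f^{-1}(y)\cap C(f^{-1}(B))=\emptyset$, i.e. $f^{-1}(y)\subseteq X-C(f^{-1}(B))=I(f^{-1}(Y-B))$ by Proposition \ref{properties of subsets} (6). Setting $U=I(f^{-1}(Y-B))\in\mu$, pseudo-openness yields $y\in I(f(U))$; since $f(U)\subseteq ff^{-1}(Y-B)=Y-B$ by surjectivity, monotonicity and Proposition \ref{properties of subsets} (3),(6) give $I(f(U))\subseteq I(Y-B)=Y-C(B)$, contradicting $y\in C(B)$. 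For the converse (3)$\Rightarrow$(1), given $f^{-1}(y)\subseteq U\in\mu$, it suffices to show $y\notin C(Y-f(U))$, because $I(f(U))=Y-C(Y-f(U))$ by Proposition \ref{properties of subsets} (6). Assuming $y\in C(Y-f(U))$, condition (3) produces $x\in f^{-1}(y)\cap C(f^{-1}(Y-f(U)))$; but $f^{-1}(Y-f(U))=X-f^{-1}(f(U))\subseteq X-U$, which is $\mu$-closed, so $x\in X-U$, contradicting $f^{-1}(y)\subseteq U$.

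For (1)$\Rightarrow$(2), fix $Y^{\prime}\subseteq Y$, put $X^{\prime}=f^{-1}(Y^{\prime})$ and let $h=f|_{X^{\prime}}$. Given $V\subseteq Y^{\prime}$ with $h^{-1}(V)\in\mu^{\prime}$, write $h^{-1}(V)=f^{-1}(V)=U\cap X^{\prime}$ for some $U\in\mu$. Each $y\in V$ satisfies $f^{-1}(y)\subseteq U$, so $y\in I(f(U))$ by (1), whence $V\subseteq I(f(U))\cap Y^{\prime}$. Conversely, any $z\in I(f(U))\cap Y^{\prime}$ lies in $f(U)$, so $z=f(x)$ with $x\in U$; since $z\in Y^{\prime}$ we get $x\in X^{\prime}$, hence $x\in U\cap X^{\prime}=f^{-1}(V)$ and $z\in V$. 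Thus $V=I(f(U))\cap Y^{\prime}\in\nu^{\prime}$, so $h$ is $g$-quotient for every $Y^{\prime}$.

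The step I expect to be the crux is (2)$\Rightarrow$(1), where the hereditary hypothesis must be activated by a nonobvious subspace. Given $y\in Y$ and $f^{-1}(y)\subseteq U\in\mu$, I would take $Y^{\prime}=(Y-f(U))\cup\{y\}$ and $X^{\prime}=f^{-1}(Y^{\prime})$. The key computation is that $U\cap X^{\prime}=f^{-1}(y)$, since $U\subseteq f^{-1}(f(U))$ annihilates the $Y-f(U)$ part; hence $(f|_{X^{\prime}})^{-1}(\{y\})=f^{-1}(y)=U\cap X^{\prime}\in\mu^{\prime}$, and hereditary $g$-quotientness forces $\{y\}\in\nu^{\prime}$, i.e. $\{y\}=V_{0}\cap Y^{\prime}$ for some $V_{0}\in\nu$. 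A final check that $V_{0}\subseteq f(U)$ (points of $V_{0}$ in $Y^{\prime}$ equal $y\in f(U)$, points outside $Y^{\prime}$ lie in $f(U)-\{y\}$) gives $y\in V_{0}\subseteq f(U)$, so $y\in I(f(U))$ by Proposition \ref{properties of subsets} (8). Identifying this subspace and verifying $U\cap X^{\prime}=f^{-1}(y)$ is the main obstacle; the remaining work is routine bookkeeping with surjectivity and the interior--closure identities.
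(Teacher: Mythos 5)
Your proof is correct, but it is organized quite differently from the paper's. The paper proves the cycle (1)$\Rightarrow$(2)$\Rightarrow$(3)$\Rightarrow$(1), and its crux is (2)$\Rightarrow$(3): for $y\in C(B)-B$ it restricts to $Y^{\prime}=B\cup\{y\}$, observes that $B$ is not $\nu^{\prime}$-closed there, invokes Theorem \ref{characterizations of g quotient} to conclude $f^{-1}(B)$ is not $\mu^{\prime}$-closed in $X^{\prime}$, and extracts a point of $C(f^{-1}(B))\cap f^{-1}(y)$ via Proposition \ref{subspaces}(2). You instead use (1) as a hub with four implications, and your crux is (2)$\Rightarrow$(1) with the subspace $Y^{\prime}=(Y-f(U))\cup\{y\}$; the identity $U\cap X^{\prime}=f^{-1}(y)$ (which you verify correctly, using $U\subseteq f^{-1}(f(U))$ and $f^{-1}(y)\subseteq U$) makes $\{y\}$ relatively open, and the check $V_{0}\subseteq f(U)$ then puts $y$ in $I(f(U))$. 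Your (1)$\Rightarrow$(2) is also a direct computation ($V=I(f(U))\cap Y^{\prime}$) rather than an appeal to Proposition \ref{hereditary property}, and your (1)$\Rightarrow$(3) by contradiction has no counterpart in the paper, which obtains (3) only through (2). Both routes are sound; the paper's is shorter because it reuses Proposition \ref{hereditary property} and the closed-set characterization of quotients, while yours is more self-contained and avoids the ``not closed'' negative argument, at the cost of one extra implication. Your (3)$\Rightarrow$(1) is essentially the paper's argument recast as a contradiction.
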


\begin{proof}
(1) $\Longrightarrow $ (2): Assume that $f$ is a $g$-pseudo-open mapping.
Proposition \ref{hereditary property} states that every $g$-pseudo-open
mapping is hereditarily $g$-pseudo-open. It follows that $%
f|_{f^{-1}(Y^{\prime })}$ is $g$-pseudo-open for each $Y^{\prime }\subseteq Y
$. Thus, it suffices to prove that every $g$-pseudo-open mapping $f$ is a $g$%
-quotient mapping so that $f|_{f^{-1}(Y^{\prime })}$ is $g$-quotient for
each $Y^{\prime }\subseteq Y$.

Let $V\subseteq Y$ such that $f^{-1}(V)\in \mu $. If $y\in V$, then $%
f^{-1}(y)\subseteq f^{-1}(V)\in \mu .$ Notice that $f$ is $g$-pseudo-open,
hence $y\in I(ff^{-1}(V))=I(V)$ for each $y\in V.$ It follows that $V\in \nu 
$. This proves that $f$ is a $g$-quotient mapping.

(2) $\Longrightarrow $ (3): Assume that $f$ is a hereditarily $g$-quotient
mapping. Let $B\subseteq Y$ $\ $and let $y\in C(B)=B\cup (C(B)-B)$.

(i) If $y\in B$, then $y\in B=f(f^{-1}(B))\subseteq f(C(f^{-1}(B)))$.

(ii) If $y\in C(B)-B$, let $Y^{\prime }=B\bigcup \{y\}$ and $\nu ^{\prime
}=\{V\bigcap Y^{\prime }:V\in \nu \},$ and let $X^{\prime }=f^{-1}(Y^{\prime
})=f^{-1}(B)\bigcup f^{-1}(y)$, $\mu ^{\prime }=\{U\bigcap X^{\prime }:U\in
\mu \}$ and $h=f|_{X^{\prime }}:(X^{\prime },\mu ^{\prime })\longrightarrow
(Y^{\prime },\nu ^{\prime })$, then $h$ is a $g$-quotient mapping because $f$
is hereditarily $g$-quotient. Since $C(B)$ is the smallest $\mu -$closed
subset of $(X,\mu )$ containing $B$ and $C(B)\cap Y^{\prime }=B\bigcup \{y\}$%
, $B$ is not a $g$-closed subset of $(Y^{\prime },\nu ^{\prime }),$ it
follows from Theorem \ref{characterizations of g quotient} that $h^{-1}(B)$
is not a $g$-closed subset of $(X^{\prime },\mu ^{\prime })$, and hence
there is $x\in C_{X^{\prime }}(h^{-1}(B))-h^{-1}(B).$ Notice that $%
h^{-1}(B)=f^{-1}(B), $ so $C_{X^{\prime }}(h^{-1}(B))=C_{X^{\prime
}}(f^{-1}(B))\subseteq C(f^{-1}(B))$ which gives $x\in C(f^{-1}(B))$, and
hence $f(x)\in f(C(f^{-1}(B))).$ On the other hand, since $C_{X^{\prime
}}(h^{-1}(B))\subseteq X^{\prime }$ and $h^{-1}(B)=f^{-1}(B),x\in
C_{X^{\prime }}(h^{-1}(B))-h^{-1}(B)\subseteq X^{\prime
}-f^{-1}(B)=f^{-1}(B)\bigcup f^{-1}(y)-f^{-1}(B)=f^{-1}(y)$, and hence $%
y=f(x)\in f(C(f^{-1}(B)))$. Finally, (i) and (ii) imply $C(B)\subseteq
f(C(f^{-1}(B)))$.

(3) $\Longrightarrow $ (1): Assume that the condition (3) holds. Let $y\in Y$
and $f^{-1}(y)\subseteq U\in \mu $. Then $f^{-1}(y)\bigcap (X-U)=\emptyset $%
, so $y\not \in f(X-U)$, i.e., $y\in Y-f(X-U)$. Since the condition (3)
holds, $C(Y-f(U))=f(C(f^{-1}(Y-f(U))))=f(C(f^{-1}(Y)-f^{-1}f(U)))\subseteq
f(C(X-U))=f(X-U)$. By Proposition \ref{properties of subsets} (6), $%
I(f(U))=Y-C(Y-f(U))\supset Y-f(X-U)$. So $y\in Y-f(X-U)$ implies $y\in
I(f(U))$. This proves that $f$ is a $g$-pseudo-open mapping.
\end{proof}

It is not difficult to see that the Corollary below holds from Theorem \ref%
{characterizations of g pseudo open} and Proposition \ref{characterizations
of g continuous}. above.

\begin{corollary}
Let $f:(X,\mu )\longrightarrow (Y,\nu )$ be a mapping. Then the following
conditions are equivalent.

(1) $f$ is a $g$-pseudo-open mapping.

(2) If $B\subseteq Y$, then $C(B)=f(C(f^{-1}(B)))$.
\end{corollary}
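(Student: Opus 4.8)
The plan is to treat this corollary as a routine upgrade of Theorem~\ref{characterizations of g pseudo open}(3) from an inclusion to an equality. That theorem already characterizes $g$-pseudo-openness by the inclusion $C(B)\subseteq f(C(f^{-1}(B)))$ for every $B\subseteq Y$, so the only genuinely new content is the reverse inclusion $f(C(f^{-1}(B)))\subseteq C(B)$; once it is available, both implications fall out at once.

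For (1)$\Longrightarrow$(2) I would assume $f$ is $g$-pseudo-open, read off $C(B)\subseteq f(C(f^{-1}(B)))$ from Theorem~\ref{characterizations of g pseudo open}, and then produce the opposite inclusion. The tool is Proposition~\ref{characterizations of g continuous}(4), which supplies $C(f^{-1}(B))\subseteq f^{-1}(C(B))$; applying $f$ and using surjectivity in the form $f(f^{-1}(C(B)))=C(B)$ gives $f(C(f^{-1}(B)))\subseteq C(B)$, and the two inclusions combine to the equality in (2).

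For (2)$\Longrightarrow$(1) I would simply observe that the equality in (2) contains the inclusion $C(B)\subseteq f(C(f^{-1}(B)))$ for every $B\subseteq Y$, so that $f$ is $g$-pseudo-open by Theorem~\ref{characterizations of g pseudo open}. This direction is formal and uses nothing beyond the theorem.

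The step I expect to be the real obstacle, and in fact the only point at which anything can fail, is the reverse inclusion $f(C(f^{-1}(B)))\subseteq C(B)$. Because $f$ is surjective, this inclusion (for all $B$) is equivalent to $C(f^{-1}(B))\subseteq f^{-1}(C(B))$, which by Proposition~\ref{characterizations of g continuous} is precisely $g$-continuity of $f$; it is therefore unavailable for a general mapping, since without $g$-continuity the set $f^{-1}(C(B))$ need not be $\mu$-closed and $C(f^{-1}(B))$ may have points outside it whose images escape $C(B)$. I would accordingly read the corollary as tacitly assuming $f$ to be $g$-continuous, in parallel with the companion corollary for $g$-open maps, which is exactly why the statement is said to rest on Proposition~\ref{characterizations of g continuous}; under that reading the argument above is complete.
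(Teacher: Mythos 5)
Your proposal is correct and follows exactly the route the paper intends: the paper gives no written proof of this corollary, only the remark that it follows from Theorem \ref{characterizations of g pseudo open} and Proposition \ref{characterizations of g continuous}, and your derivation (the forward inclusion $C(B)\subseteq f(C(f^{-1}(B)))$ is the theorem; the reverse inclusion comes from Proposition \ref{characterizations of g continuous}(4) together with surjectivity, $f(C(f^{-1}(B)))\subseteq f(f^{-1}(C(B)))=C(B)$) is precisely that argument. You are also right to flag that the implication (1) $\Longrightarrow$ (2) genuinely requires $f$ to be $g$-continuous --- without it one can build a surjective $g$-pseudo-open map with $f(C(f^{-1}(B)))\not\subseteq C(B)$ --- so the hypothesis is missing from the printed statement but clearly intended, exactly as it appears explicitly in the companion corollary for $g$-open mappings.
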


Finally, we can establish relationships among $g-$open mappings, $g-$closed
mappings, $g-$pseudo-open mappings, and $g-$quotient mappings on generalized
topological spaces.

\begin{theorem}
\label{relationship}Let $f:(X,\mu )\longrightarrow (Y,\nu )$ be a mapping.
Consider the following conditions.

(1) $f$ is a $g$-open mapping.

(2) $f$ is a $g$-closed mapping.

(3) $f$ is a $g$-pseudo-open mapping.

(4) $f$ is a $g$-quotient mapping.

Then (1) $\Longrightarrow $ (3) $\Longrightarrow $ (4) and (2) $%
\Longrightarrow $ (3).
\end{theorem}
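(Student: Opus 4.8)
The plan is to verify the three implications one at a time, each by reducing to a characterization or definition already established, so that no new machinery is needed. In every case the real content is to connect the hypothesis on $f$ to the defining condition of $g$-pseudo-openness, namely that $y\in I(f(U))$ whenever $f^{-1}(y)\subseteq U\in \mu $, and to notice where the standing surjectivity assumption (giving $ff^{-1}(S)=S$ for $S\subseteq Y$) is used.

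For (1) $\Longrightarrow $ (3), I would argue directly. Suppose $f$ is $g$-open and fix $y\in Y$ with $f^{-1}(y)\subseteq U\in \mu $. Surjectivity makes $f^{-1}(y)$ nonempty, so some $x\in f^{-1}(y)\subseteq U$ gives $y=f(x)\in f(U)$; and $g$-openness gives $f(U)\in \nu $. Since $f(U)$ is $\nu $-open, Proposition \ref{properties of subsets} (4) yields $I(f(U))=f(U)$, whence $y\in I(f(U))$, which is exactly the $g$-pseudo-open condition.

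For (2) $\Longrightarrow $ (3), I would lean on Theorem \ref{characterizations of g closed} (4): since $f$ is $g$-closed, whenever $f^{-1}(y)\subseteq U\in \mu $ there is $V\in \nu $ with $y\in V$ and $f^{-1}(V)\subseteq U$. Applying $f$ and using $ff^{-1}(V)=V$ gives $V=f(f^{-1}(V))\subseteq f(U)$, so $y\in V\subseteq f(U)$ with $V\in \nu _{y}$; Proposition \ref{properties of subsets} (8) then delivers $y\in I(f(U))$, establishing (3). For (3) $\Longrightarrow $ (4) I would reproduce the short argument already embedded in the proof of Theorem \ref{characterizations of g pseudo open}: given $V\subseteq Y$ with $f^{-1}(V)\in \mu $, each $y\in V$ satisfies $f^{-1}(y)\subseteq f^{-1}(V)\in \mu $, so $g$-pseudo-openness gives $y\in I(ff^{-1}(V))=I(V)$ (again using $ff^{-1}(V)=V$). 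Hence $V\subseteq I(V)$, which together with $I(V)\subseteq V$ from Proposition \ref{properties of subsets} (1) forces $I(V)=V$, so $V\in \nu $ by Proposition \ref{properties of subsets} (4), i.e. $f$ is $g$-quotient.

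I expect no serious obstacle here: every step is a one-line reduction to a previously proved characterization or to the basic interior/closure facts of Proposition \ref{properties of subsets}. The only point that demands care is the repeated reliance on the identity $ff^{-1}(S)=S$, which is legitimate precisely because the paper assumes all mappings surjective; each of the three implications secretly rests on it, and I would flag its use explicitly rather than let it pass silently.
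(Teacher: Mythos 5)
Your proof is correct and follows essentially the same route as the paper: (1)$\Longrightarrow$(3) and (2)$\Longrightarrow$(3) are reduced to the defining condition of $g$-pseudo-openness via the characterizations of $g$-open and $g$-closed mappings, and your (3)$\Longrightarrow$(4) is exactly the direct open-set argument the paper itself gives inside the proof of Theorem \ref{characterizations of g pseudo open}. The only cosmetic differences are that the paper's own proof of (3)$\Longrightarrow$(4) in Theorem \ref{relationship} instead routes through the closure characterizations of Theorems \ref{characterizations of g pseudo open} and \ref{characterizations of g quotient}, and that your (2)$\Longrightarrow$(3) is stated more carefully than the paper's (which writes ``$V\in\mu$'' where it means the $\nu$-open set $V$ with $f^{-1}(V)\subseteq U$ supplied by Theorem \ref{characterizations of g closed}(4)); your explicit flagging of the surjectivity identity $ff^{-1}(S)=S$ is appropriate.
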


\begin{proof}
(1) $\Longrightarrow $ (3): Assume that $f$ is a $g$-open mapping. Let $y\in
Y$ such that $f^{-1}(y)\subseteq U\in \mu $. Then $y\in f(U)$ and $U=I(U)$.
By Theorem \ref{characterizations of g open}, $y\in f(U)=f(I(U))\subseteq
I(f(U))$. This proves that $f$ is a $g$-pseudo-open mapping.

(3) $\Longrightarrow $ (4): It holds from Theorem \ref{characterizations of
g closed}. Here, we give a proof by using characterizations of $g$%
-pseudo-open mappings and $g$-quotient mappings. Assume that $f$ is a $g$%
-pseudo-open mapping. Let $F\subseteq Y$ such that $f^{-1}(F)$ is a $\mu $%
-closed subset of $X$. Then $C(f^{-1}(F))=f^{-1}(F)$. By Theorem \ref%
{characterizations of g closed}, $C(F)=f(C(f^{-1}(F)))=f(f^{-1}(F))=F$. So $F
$ is a $\mu $-closed subset of $X$. This proves that $f$ is a $g$-quotient
mapping.

(2) $\Longrightarrow $ (3): Assume that $f$ is a $g$-closed mapping. Let $%
y\in Y$ such that $f^{-1}(y)\subseteq U\in \mu $. By Theorem \ref%
{characterizations of g closed}, there is $V\in \mu $ such that $%
f^{-1}(y)\subseteq V\subseteq U$ and $f(V)\in \nu $. So $y\in
f(V)=I(f(V))\subseteq I(f(U))$. This proves that $f$ is a $g$-pseudo-open
mapping.
\end{proof}

\begin{remark}
\label{reverse relationship} Among $g$-open mappings, $g$-closed mappings, $%
g $-pseudo-open mappings and $g$-quotient mappings, the only implications
that hold are those that can be obtained from Theorem \ref{relationship}. In
fact, if both $(X,\mu )$ and $(Y,\nu )$ in Theorem \ref{relationship} are
topological spaces, then $f$ is $g$-open (resp. $g$-closed, $g$-pseudo-open, 
$g$-quotient) if and only if $f$ is open (resp. closed, pseudo-open,
quotient). Moreover, among open mappings, closed mappings, pseudo-open
mappings and quotient mappings, all other implications rather than those
that are similar to Theorem \ref{relationship} cannot hold. (see \cite{E,L},
for example).
\end{remark}


\end{document}